\newtheorem{theorem}{Theorem}[section]
\newtheorem{lemma}[theorem]{Lemma}
\newtheorem{corollary}[theorem]{Corollary}
\theoremstyle{definition}
\newtheorem{definition}[theorem]{Definition}
\newtheorem{proposition}[theorem]{Proposition}
\theoremstyle{remark}
\newtheorem{remark}[theorem]{Remark}
\numberwithin{equation}{section}
\begin{document}

\title{A sphere theorem for Bach-flat manifolds with positive constant scalar curvature}

%    Information for first author
%    \thanks will become a 1st page footnote.
%\thanks{The first author was supported in part by NSF Grant \#000000.}

\author{Yi Fang}
%    Address of record for the research reported here
\address{(Yi Fang) Department of Applied Mathematics, Anhui University of Technology, Ma'anshan, Anhui 243002, China}
\email{flxy85@163.com}

%    Information for second author
\author{Wei Yuan}
\address{(Wei Yuan) Department of Mathematics, Sun Yat-sen University, Guangzhou, Guangdong 510275, China}
\email{gnr-x@163.com}

%    General info
%\subjclass[2000]{Primary 54C40, 14E20; Secondary 46E25, 20C20}

%\date{January 1, 2001 and, in revised form, June 22, 2001.}

%\dedicatory{This paper is dedicated to our advisors.}

\keywords{Bach-flat, sphere theorem, constant scalar curvature, gap theorem}

\thanks{This work was supported by NSFC (Grant No. 11521101, No. 11601531), \emph{The Fundamental Research Funds for the Central Universities} (Grant No. 2016-34000-31610258) and \emph{The Young Teachers' Science Research Funds of Anhui University of Technology} (Grant No. RD16100248 ). }

\begin{abstract}
We show a closed Bach-flat Riemannian manifold with a fixed positive constant scalar curvature has to be locally spherical if its Weyl and traceless Ricci tensors are small in the sense of either $L^\infty$ or $L^{\frac{n}{2}}$-norm. Compared with the complete non-compact case done by Kim, we apply a different method to achieve these results. These results generalize a rigidity theorem of positive Einstein manifolds due to M.-A.Singer. As an application, we can partially recover the well-known Chang-Gursky-Yang's $4$-dimensional conformal sphere theorem.
\end{abstract}

\maketitle

%% The correct journal style for \specialsection is all uppercase; a known bug
%% in amsart.cls prevents this, so input must be uppercase until it is fixed.
%\specialsection*{This is a Special Section Head}

%%%%%%%%%%%%%%%%%%%%%%%%%%%%%%%%%%%%%%%%%%%%%%%%%%%%%%%%%%%%%%%%%%%%%%%%
%\footnote{Here is an example of a footnote. Notice that this footnote
%text is running on so that it can stand as an example of how a
%footnote with separate paragraphs should be written.
%\par
%And here is the beginning of the second paragraph.}%
%%%%%%%%%%%%%%%%%%%%%%%%%%%%%%%%%%%%%%%%%%%%%%%%%%%%%%%%%%%%%%%%%%%%%%%%

\section{Introduction}

The notion of Bach tensor was first introduced by Rudolf Bach in 1921 (see \cite{Bach}) when studying the so-called \emph{conformal gravity}. That is, instead of using the Hilbert-Einstein functional, one consider the functional
$$\mathcal{W}(g) = \int_{M^4} |W (g)|^2 dv_g$$
on $4$-dimensional manifolds. The corresponding critical points of this functional are characterized by the vanishing of certain symmetric $2$-tensor $B_g$. The tensor $B_g$ is usually referred as Bach tensor and the metric is called \emph{Bach-flat}, if $B_g$ vanishes. \\

Let $(M^n,g)$ be an $n$-dimensional Riemannian manifold ($n\geq 4$).  The \emph{Bach tensor} is defined to be
\begin{align}
B_{jk} = \frac{1}{n-3} \nabla^i \nabla^l W_{ijkl} + W_{ijkl}S^{il},
\end{align}
where 
\begin{align}
S_{jk} = \frac{1}{n-2} \left(R_{jk} - \frac{1}{2(n-1)} R g_{jk}\right)
\end{align}
is the Schouten tensor.\\

Using the \emph{Cotton tensor}  
\begin{align}
C_{ijk} = \nabla_i S_{jk} - \nabla_j S_{ik}
\end{align}
and the relation
\begin{align}
\nabla^l W_{ijkl} = (n-3) C_{ijk},
\end{align}
we can extend the definition of Bach tensor such that it can be defined for $3$-dimensional manifolds:
\begin{definition}
For any $n\geq 3$, the Bach tensor is defined to be
\begin{align}
B_{jk} = \nabla^i C_{ijk} + W_{ijkl}S^{il}.
\end{align}
We say a metric is \emph{Bach-flat}, if its Bach tensor vanishes.\\
\end{definition}

Typical examples of Bach flat metrics are Einstein metrics and locally conformally flat metrics. Due to the conformal invariance of Bach-flatness on $4$-manifolds, metric conformal to Einstein metrics are also Bach-flat. For $4$-dimensional manifolds, it also includes half-locally conformally flat metrics. In general, Tian and Viaclovsky studied the module space of $4$-dimensional Bach-flat manifolds (cf. \cite{T-V_1, T-V_2}). Besides these known "trivial" examples, there are not many examples known about generic Bach-flat manifolds so far. In fact, in some particular situations, one would expect rigidity phenomena occur.\\

In \cite{Kim}, Kim shows that on a complete non-compact $4$-dimensional Bach-flat manifold $(M, g)$ with zero scalar curvature and positive Yamabe constant has to be flat, if the $L^2(M,g)$-norm of its Riemann curvature tensor is sufficiently small. This result can be easily extended to any dimension $n \geq 3$. \\

Kim's proof is based on a classic idea that one can get global rigidity from local estimates: applying the ellipticity of Bach-flat metric, the Sobolev's inequality and together the smallness of $||Rm||_{L^2(M,g)(M, g)}$, one can get the estimate
\begin{align*}
||Rm||_{L^4(B_r(p), g)} \leq \frac{C}{r}||Rm||_{L^2(M,g)(M, g)}
\end{align*}
for any fixed $p\in M$ and $r > 0$. Now the conclusion follows by letting $r \rightarrow \infty$.\\

This method can also be used in various problems, for example, see \cite{Chen}. However, note that the assumption of non-compactness is essential here. One can not get the rigidity by simply letting $r \rightarrow \infty$, when the manifold is compact without boundary for instance.\\

Is it possible for us to have a result similar to Kim's but on closed manifolds? Here by \emph{closed manifolds}, we mean compact manifolds without boundary. In fact, Singer proved that even dimensional closed positive Einstein manifolds with non-vanishing Euler characteristic have to be locally spherical, provided the $L^{\frac{n}{2}}$-norm of its Weyl tensor is small (cf. \cite{Singer}). As a special case of Bach-flat metric, this result suggests that this phenomenon might occur in a larger class. \\

Applying a global estimate for symmetric $2$-tensors (see Proposition \ref{prop:ineq_symmetric_2_tensor_est}), we can prove the following result:
\newtheorem*{thm_A}{\bf Theorem A}
\begin{thm_A}\label{thm:sphere_thm_Bach_flat_L^infty}
Suppose $(M^n, g)$ is a closed Bach-flat Riemannian manifold with constant scalar curvature
$$R_g = n(n-1).$$
If
\begin{align}
||W||_{L^\infty(M, g)} + ||E||_{L^\infty(M, g)} < \varepsilon_0 (n):=\frac{n-1}{4}, 
\end{align}
then $(M, g)$ is isometric to a quotient of the round sphere $\mathbb{S}^n$.
\end{thm_A}

\begin{remark}
Note that in Theorem A we do not assume the Yamabe constant is uniformly positively lower bounded. This assumption will be needed in Theorem B. It is equivalent to the existence of a uniform Sobolev's inequality (see section 4), which was applied frequently in the proof of Theorem B.
\end{remark}

Another one by assuming integral conditions:
\newtheorem*{thm_B}{\bf Theorem B}
\begin{thm_B}\label{thm:sphere_thm_Bach_flat_L^{n/2}}
Suppose $(M^n, g)$ is a closed Bach-flat Riemannian manifold with constant scalar curvature
$$R_g = n(n-1).$$ Assume that there is a constant $\alpha_0$ such that its Yamabe constant satisfies that
\begin{align}
Y(M, [g]) \geq \alpha_0 > 0.
\end{align}
Then $(M,g)$ is isometric to a quotient of the round sphere $\mathbb{S}^n$, if
\begin{align}\label{presumption:tau_0}
||W||_{L^{\frac{n}{2}}(M,g)} + ||E||_{L^{\frac{n}{2}}(M,g)} <  \tau_0 (n, \alpha_0):= \frac{3\alpha_0}{32n(n-1)}.
\end{align}
\end{thm_B}

\begin{remark}
Bach-flat metrics is one of the typical examples of the so-called \emph{critical metrics} (cf. \cite{T-V_1}). By replacing the presumption \emph{Bach-flatness} with \emph{harmonic curvature}, which refers to the vanishing of Cotton tensor when the scalar curvature is a constant, the corresponding version of Theorem A and B are still valid without any essential difficulty.
\end{remark}

In particular, applying Theorem B for $4$-dimensional manifolds, we can partially recover the well-known $4$-dimensional conformal sphere theorem by Chang-Gursky-Yang (cf. \cite{C-G-Y}; for a generalization see \cite{C-Z}):
\newtheorem*{thm_C}{\bf Theorem C}
\begin{thm_C}\label{thm:sphere_thm_Bach_flat_L^{n/2}}
Suppose $(M^4, g)$ is a closed Bach-flat Riemannian manifold. Assume that there is a constant $\alpha_0$ such that its Yamabe constant satisfies that
\begin{align}
Y(M, [g]) \geq \alpha_0 > 0.
\end{align}
Then $(M,g)$ is conformal to the round sphere $\mathbb{S}^4$ or its canonical quotient $\mathbb{R}P^4$, if
\begin{align}
\int_{M^4} |W_g|^2 dv_g  < \frac{32}{3} \pi^2 (\chi(M^4) - 2) +  \frac{\alpha_0}{192}.
\end{align}
\end{thm_C}

\begin{remark}
It was shown in \cite{C-G-Y} that $(M^4, g)$ is conformal to $(\mathbb{C}P^2, g_{FS})$ or a manifold covered isometrically by $S^1\times S^3$ endowed with the canonical product metric, if we assume
\begin{align}
\int_{M^4} |W_g|^2 dv_g  =  16 \pi^2 \chi(M^4) 
\end{align}
instead.
\end{remark}

\paragraph{\textbf{Acknowledgement}}

The author would like to express their appreciations to Professor Huang Xian-Tao for his interests in this problem and inspiring discussions.\\

%%%%%%%%%%%%%%%%%%%%%%%%%%%%%%%%

\section{$\theta$-Codazzi tensor and related inequality}
We define a concept which generalizes the classic \emph{Codazzi tensor}:
\begin{definition}
For any $\theta \in \mathbb{R}$, we say a symmetric $2$-tensor $h \in S_2(M)$ is a $\theta$-Condazzi tensor if
\begin{align}
C_\theta (h)_{ijk} := \nabla_i h_{jk} - \theta \nabla_j h_{ik} = 0.
\end{align}
In particular, $h$ is referred to be a \emph{Codazzi tensor} or \emph{anti-Codazzi tensor} if $\theta = 1$ or $\theta = -1$ respectively.
\end{definition}

The motivation for us to define this notion is the following identity associated to it:
\begin{lemma}\label{lem:theta_Codazzi_identiy}
Suppose $(M, g)$ is a closed Riemannian manifold with constant scalar curvature
$$R_g = n(n-1)\lambda.$$
Then for any $h \in S_2(M)$ and $\theta \in \mathbb{R}$,
\begin{align}
 &\int_M \left(|\nabla h|^2 - \frac{1} {1 + \theta^2} |C_\theta (h)|^2 \right) dv_{g} 
 \\
 =&  \frac{2 \theta} {1 + \theta^2}\int_M \left[ |\delta h|^2 + W(\overset{\circ}{h}, \overset{\circ}{h}) + \frac{2}{n-2}(tr h) E\cdot h - \frac{n}{n-2} tr (E \times h^2) - n \lambda |\overset{\circ}{h}|^2  \right] dv_{g} ,\notag
\end{align}
where $\overset{\circ}{h} := h - \frac{1}{n}(tr h) g$ is the traceless part of the tensor $h$.
\end{lemma}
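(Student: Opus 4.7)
The plan is to reduce the identity to a pointwise computation followed by two integrations by parts. First, because $h$ is symmetric, $|\nabla_i h_{jk}|^2 = |\nabla_j h_{ik}|^2 = |\nabla h|^2$; expanding the square in the definition of $C_\theta(h)$ then gives
\begin{align*}
|C_\theta(h)|^2 = (1+\theta^2)|\nabla h|^2 - 2\theta\, \nabla^i h^{jk}\,\nabla_j h_{ik},
\end{align*}
so the LHS integrand is exactly $\frac{2\theta}{1+\theta^2}\,\nabla^i h^{jk}\,\nabla_j h_{ik}$. It therefore suffices to evaluate $\int_M \nabla^i h^{jk}\,\nabla_j h_{ik}\,dv_g$ and show that it equals the bracketed expression on the RHS.

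For this, I would integrate by parts to move $\nabla^i$ from $h^{jk}$ onto $\nabla_j h_{ik}$, then commute the covariant derivatives via the Ricci identity, writing $\nabla^i\nabla_j h_{ik} = \nabla_j\nabla^i h_{ik} + [\nabla^i, \nabla_j]h_{ik}$. A second integration by parts on the $\nabla_j\nabla^i h_{ik}$ term produces $\int_M |\delta h|^2\,dv_g$ after contracting with $h^{jk}$. The commutator $[\nabla^i, \nabla_j]h_{ik}$ expands into two Riemann-curvature contractions with $h$: one trace collapses to a Ricci contraction of the form $R_{jk}(h^2)^{jk}$, while the other is a full Riemann-$h\otimes h$ contraction of the shape $R_{ijkl}h^{jk}h^{il}$.

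To simplify these, I would use the standard decomposition
\begin{align*}
R_{ijkl} = W_{ijkl} + S_{ik}g_{jl} + S_{jl}g_{ik} - S_{il}g_{jk} - S_{jk}g_{il},
\end{align*}
where the Schouten tensor admits the further decomposition $S = \frac{1}{n-2}E + \frac{\lambda}{2}g$ under the constant scalar curvature assumption $R = n(n-1)\lambda$. The Weyl piece, being totally trace-free, contracts with $h\otimes h$ to yield exactly $W(\overset{\circ}{h},\overset{\circ}{h})$. The four Schouten terms expand into scalar combinations of $\mathrm{tr}(Eh^2)$, $(\mathrm{tr}\,h)(E\cdot h)$, $\lambda|h|^2$, and $\lambda(\mathrm{tr}\,h)^2$. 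Adding the Ricci contraction $R_{jk}(h^2)^{jk}$, itself split via $\mathrm{Ric} = E + (n-1)\lambda g$, and then using the orthogonal splitting $|h|^2 = |\overset{\circ}{h}|^2 + \frac{1}{n}(\mathrm{tr}\,h)^2$ to assemble the $\lambda$-terms into $-n\lambda|\overset{\circ}{h}|^2$, one should recover exactly the bracketed RHS.

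The argument is essentially bookkeeping; the main obstacle is keeping all signs and index contractions consistent — in particular, ensuring that the Weyl contribution depends only on $\overset{\circ}{h}$ (which follows from the total trace-freeness of $W$) and that the Schouten expansion combines correctly with the Ricci piece so that the two $\mathrm{tr}(Eh^2)$ contributions add to $\frac{n}{n-2}\mathrm{tr}(E\times h^2)$ with the correct sign. No new conceptual ingredient beyond the Ricci identity and the standard curvature decomposition is required.
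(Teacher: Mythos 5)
Your proposal is correct and follows essentially the same route as the paper: expand $|C_\theta(h)|^2$ to isolate the cross term $\nabla_i h_{jk}\nabla^j h^{ik}$, integrate by parts twice using the Ricci identity to produce $|\delta h|^2$ plus curvature contractions, and then decompose the Riemann and Ricci tensors into Weyl, traceless Ricci, and scalar pieces before regrouping via $|h|^2 = |\overset{\circ}{h}|^2 + \frac{1}{n}(\operatorname{tr}h)^2$. No gap.
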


\begin{proof}
We have 
\begin{align*}
&\int_M \nabla_i h_{jk} \nabla^j h^{ik} dv_{g}\\
=& -\int_M  \nabla_j\nabla_i h_k^{\ j}  h^{ik} dv_{g}\\
=&-\int_M ( \nabla_i \nabla_j h_k^{\ j} + R_{j i l}^j h^{\ l}_k - R_{j i k}^l h_l^{\ j})  h^{i k} dv_{g}\\
=&-\int_M ( - \nabla_i (\delta h)_k + R_{i l} h^{\ l}_k - R_{j i k l} h^{j l})  h^{ik} dv_{g}\\
=&\int_M \left[|\delta h|^2 - \left( E_{i l} h^{\ l}_k + (n-1) \lambda g_{il} h^{\ l}_k \right) h^{ik}\right] dv_{g} \\
&+ \int_M \left( W_{jikl} + \frac{2}{n-2}(E_{jl}g_{ik} - E_{jk}g_{il}) + \lambda ( g_{jl} g_{ik} - g_{jk } g_{il} ) \right) h^{jl}h^{ik} dv_{g}\\
=& \int_M \left[|\delta h|^2 + W (h, h) + \lambda ( (tr h)^2 - n |h|^2 ) + \frac{2}{n-2}(tr h) E\cdot h - \frac{n}{n-2} tr (E \times h^2)\right] dv_{g}\\
=& \int_M \left[|\delta h|^2 + W(\overset{\circ}{h}, \overset{\circ}{h}) + \frac{2}{n-2}(tr h) E\cdot h - \frac{n}{n-2} tr (E \times h^2) - n \lambda |\overset{\circ}{h}|^2 \right] dv_{g}.
\end{align*}
Thus for any $\theta \in \mathbb{R}$,
\begin{align*}
& \int_M |C_\theta (h)|^2 dv_g\\
=& \int_M |\nabla_i h_{jk} - \theta \nabla_j h_{ik}|^2 dv_{g}\\
=& \int_M \left[ (1 + \theta^2)|\nabla h|^2 - 2 \theta \nabla_i h_{jk}\nabla^j h^{ik}\right]  dv_{g}\\
=& \int_M \left[(1 + \theta^2)|\nabla h|^2 - 2 \theta  \left(|\delta h|^2 + W(\overset{\circ}{h}, \overset{\circ}{h}) + \frac{2}{n-2}(tr h) E\cdot h - \frac{n}{n-2} tr (E \times h^2) - n \lambda |\overset{\circ}{h}|^2  \right) \right] dv_{g}.
\end{align*}
That is,
\begin{align*}
 &\int_M \left(|\nabla h|^2 - \frac{1} {1 + \theta^2} |C_\theta (h)|^2 \right) dv_{g} 
 \\
 =&  \frac{2 \theta} {1 + \theta^2}\int_M \left[ |\delta h|^2 + W(\overset{\circ}{h}, \overset{\circ}{h}) + \frac{2}{n-2}(tr h) E\cdot h - \frac{n}{n-2} tr (E \times h^2) - n \lambda |\overset{\circ}{h}|^2  \right] dv_{g}  .
\end{align*}
\end{proof}

From this, we get the following inequality:
\begin{proposition}\label{prop:ineq_symmetric_2_tensor_est}
Suppose $(M, g)$ is a closed Riemannian manifold with constant scalar curvature
$$R_g = n(n-1)\lambda.$$
Then for any $h \in S_2(M)$ and $\theta \in \mathbb{R}$,
\begin{align}
 \int_M |\nabla h|^2  dv_{g} 
 \geq & \frac{2 \theta} {1 + \theta^2}\int_M \left[ |\delta h|^2 + W(\overset{\circ}{h}, \overset{\circ}{h} ) + \frac{2}{n-2}(tr h) E\cdot h- \frac{n}{n-2} tr (E \times h^2)  - n\lambda|\overset{\circ}{h}|^2 \right]dv_{g},
\end{align}
where equality holds if and only $h$ is a $\theta$-Codazzi tensor.
\end{proposition}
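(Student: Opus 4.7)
The plan is to deduce the proposition as an immediate consequence of the $\theta$-Codazzi identity in Lemma \ref{lem:theta_Codazzi_identiy}. The content is really just a rearrangement of that identity together with the sign of the term we discard.

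First I would rewrite the identity from Lemma \ref{lem:theta_Codazzi_identiy} as
\begin{align*}
\int_M |\nabla h|^2 \, dv_g \;=\; \frac{1}{1+\theta^2}\int_M |C_\theta(h)|^2 \, dv_g \;+\; \frac{2\theta}{1+\theta^2}\int_M \mathcal{F}(h) \, dv_g,
\end{align*}
where $\mathcal{F}(h)$ denotes the bracketed integrand on the right-hand side of the lemma. Since $|C_\theta(h)|^2 \geq 0$ pointwise and $\frac{1}{1+\theta^2} > 0$, dropping the first term on the right yields the desired inequality
\begin{align*}
\int_M |\nabla h|^2 \, dv_g \;\geq\; \frac{2\theta}{1+\theta^2}\int_M \mathcal{F}(h) \, dv_g.
\end{align*}

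For the equality statement, equality holds precisely when $\int_M |C_\theta(h)|^2 \, dv_g = 0$, which (as $|C_\theta(h)|^2$ is continuous and non-negative) is equivalent to $C_\theta(h) \equiv 0$, i.e.\ $h$ being a $\theta$-Codazzi tensor in the sense of the definition.

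There is no real obstacle here: the hard work has already been done in Lemma \ref{lem:theta_Codazzi_identiy} via the commutation of covariant derivatives and the decomposition of the Riemann tensor into Weyl, traceless Ricci and scalar parts (which uses $R_g = n(n-1)\lambda$ being constant to discard the $\nabla R$ term from the contracted second Bianchi identity when integrating by parts against $\delta h$). Once that identity is in hand, the proposition is a one-line consequence of the non-negativity of $|C_\theta(h)|^2$.
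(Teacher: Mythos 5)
Your proposal is correct and matches the paper exactly: the proposition is stated there as an immediate consequence of Lemma \ref{lem:theta_Codazzi_identiy}, obtained by moving the non-negative term $\frac{1}{1+\theta^2}\int_M |C_\theta(h)|^2\, dv_g$ to the other side and discarding it, with equality precisely when $C_\theta(h)\equiv 0$. Nothing further is needed.
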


In particular, we have
\begin{corollary}\label{cor:sym_tensor_est}
Suppose $(M, g)$ is a closed Riemannian manifold with constant scalar curvature
$$R_g = n(n-1)\lambda.$$
Then the traceless part of Ricci tensor satisfies 
\begin{align}
 \int_M |\nabla E|^2  dv_{g} 
 \geq & \frac{2 \theta} {1 + \theta^2}\int_M \left[  W(E, E ) - \frac{n}{n-2} tr E^3  - n\lambda|E|^2 \right]dv_{g},
\end{align}
In particular when $\theta = 1$, the equality holds if and only if $g$ is of harmonic curvature.
\end{corollary}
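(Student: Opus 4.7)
The corollary is an immediate specialization of Proposition \ref{prop:ineq_symmetric_2_tensor_est} to the choice $h = E$, so the plan is to verify that the four correction terms on the right-hand side simplify in the expected way under this substitution.

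First I would observe that $E$ is by definition traceless, so $\mathrm{tr}\, E = 0$ and the term $\frac{2}{n-2}(\mathrm{tr}\, h) E\cdot h$ drops out; for the same reason the ``traceless part'' coincides with the tensor itself, i.e.\ $\overset{\circ}{E} = E$, which accounts for the replacement of $W(\overset{\circ}{h},\overset{\circ}{h})$ by $W(E,E)$ and of $|\overset{\circ}{h}|^2$ by $|E|^2$. Next I would use the constant scalar curvature hypothesis together with the contracted second Bianchi identity $\nabla^i R_{ij} = \tfrac{1}{2} \nabla_j R$: since $E_{jk} = R_{jk} - \tfrac{R}{n}g_{jk}$ and $R$ is constant, it follows that
\begin{align*}
(\delta E)_k = \nabla^i E_{ik} = \nabla^i R_{ik} - \tfrac{1}{n} \nabla_k R = 0,
\end{align*}
so the $|\delta h|^2$ term also vanishes. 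Substituting these reductions into Proposition \ref{prop:ineq_symmetric_2_tensor_est} yields exactly the stated inequality, with the convention $\mathrm{tr}(E\times E^2) = \mathrm{tr}\, E^3$.

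For the equality statement when $\theta=1$, I would invoke the ``equality holds iff $h$ is a $\theta$-Codazzi tensor'' clause of Proposition \ref{prop:ineq_symmetric_2_tensor_est}, which for $\theta=1$ and $h=E$ becomes $\nabla_i E_{jk} = \nabla_j E_{ik}$. Finally I would translate this into harmonic curvature by relating $E$ to the Schouten tensor: since
\begin{align*}
S_{jk} = \frac{1}{n-2} E_{jk} + \frac{R}{2n(n-1)(n-2)} \cdot \bigl(\text{constant}\bigr) g_{jk},
\end{align*}
and $R$ is constant, one has $C_{ijk} = \nabla_i S_{jk} - \nabla_j S_{ik} = \frac{1}{n-2}\left(\nabla_i E_{jk} - \nabla_j E_{ik}\right)$, so $E$ being Codazzi is equivalent to vanishing of the Cotton tensor, i.e.\ to harmonic curvature under the constant scalar curvature assumption. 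There is no serious obstacle here; the only mild care needed is to track that the scalar curvature hypothesis is precisely what makes $\delta E$ vanish and what identifies the Codazzi condition on $E$ with harmonic curvature.
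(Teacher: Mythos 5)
Your proposal is correct and follows essentially the same route as the paper: specialize Proposition \ref{prop:ineq_symmetric_2_tensor_est} to $h=E$, use $\operatorname{tr}E=0$ and the contracted second Bianchi identity with constant scalar curvature to kill the $\delta h$ and $(\operatorname{tr}h)\,E\cdot h$ terms, and identify the $\theta=1$ Codazzi condition on $E$ with the vanishing of the Cotton tensor. (The coefficient you wrote in the displayed decomposition of the Schouten tensor is off --- it should be $S=\frac{1}{n-2}E+\frac{R}{2n(n-1)}g$ --- but this is immaterial since all that is used is that the trace part is a constant multiple of the parallel metric.)
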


\begin{proof}
By the second Bianchi identity, we can easily see that 
$$\delta E = - \frac{n-2}{2n} d R_g = 0.$$
Note that $tr E = 0$,  thus the conclusion follows.

When $\theta = 1$, $E$ is a Codazzi tensor if and only if the Cotton tensor vanishes:
$$C_{ijk} = \frac{1}{n-2}\underset{i,j}{Alt}\left( \nabla_i R_{jk} - \frac{1}{2(n-1)}  g_{jk}\nabla_i R\right) = 0.$$
\end{proof}

%%%%%%%%%%%%%%%%%%%%%%%%%%%%%%

\section{$L^\infty$-sphere theorem}

We can rewrite the Bach tensor in terms of traceless Ricci tensor:
\begin{lemma}\label{lem:Bach_expression}
The Bach tensor can be expressed as follow
\begin{align}
B_g =& \frac{1}{n-2}\Delta_g E - \frac{1}{2(n-1)} \left( \nabla^2_g R - \frac{1}{n} g \Delta_g R\right) +\frac{ 2}{n-2} \overset{\circ}{W} \cdot E \\
\notag &- \frac{n}{ (n-2)^2} \left( E\times E - \frac{1}{n}|E|^2 g  \right)  - \frac{R}{(n-1)(n-2)} E,
\end{align}
where $ (\overset{\circ}{W} \cdot E)_{jk} := W_{ijkl}E^{il}$.
\end{lemma}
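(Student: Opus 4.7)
The plan is to expand the defining formula $B_{jk} = \nabla^i C_{ijk} + W_{ijkl} S^{il}$ and reduce every piece to expressions in the scalar curvature $R$ and the traceless Ricci $E$. First, substituting $R_{jk} = E_{jk} + \frac{R}{n} g_{jk}$ into the Schouten tensor yields
$$S_{jk} = \frac{1}{n-2} E_{jk} + \frac{R}{2n(n-1)} g_{jk}.$$
Then, using $C_{ijk} = \nabla_i S_{jk} - \nabla_j S_{ik}$, I would write
$$\nabla^i C_{ijk} = \Delta_g S_{jk} - \nabla^i \nabla_j S_{ik},$$
and compute $\Delta_g S_{jk}$ directly from the formula above, producing $\tfrac{1}{n-2}\Delta_g E_{jk} + \tfrac{\Delta R}{2n(n-1)} g_{jk}$.

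The second step is to handle the mixed second derivative. I would commute indices,
$$\nabla^i \nabla_j S_{ik} = \nabla_j \nabla^i S_{ik} + [\nabla^i, \nabla_j] S_{ik},$$
apply the contracted second Bianchi identity $\nabla^i S_{ik} = \frac{1}{2(n-1)} \nabla_k R$ to the first piece (contributing $-\tfrac{1}{2(n-1)} \nabla_j \nabla_k R$), and use the Ricci identity to rewrite the commutator as a Riemann--Schouten contraction. Substituting the standard Weyl decomposition
$$R_{ijkl} = W_{ijkl} + \frac{1}{n-2}\bigl(g_{ik}E_{jl} + g_{jl}E_{ik} - g_{il}E_{jk} - g_{jk}E_{il}\bigr) + \frac{R}{n(n-1)}(g_{ik}g_{jl} - g_{il}g_{jk}),$$
together with the expression for $S$ above, reduces the commutator to a linear combination of $W\cdot E$, $E\times E$, $|E|^2 g$, $R\, E$ and $R^2 g$ type terms.

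Finally, I would collect the $W\cdot E$-type contributions from the commutator together with the explicit $W_{ijkl} S^{il}$ term. Since the Weyl tensor is totally trace-free the scalar part of $S$ drops out of any such contraction, so only $E$ survives and the coefficients should consolidate into $\tfrac{2}{n-2}$, matching the stated $\tfrac{2}{n-2}\overset{\circ}{W}\cdot E$. The scalar pieces proportional to $g$ must then reorganize themselves into $-\tfrac{1}{2(n-1)}(\nabla^2 R - \tfrac{1}{n} g \Delta R)$, a reorganization forced by and consistent with the well-known fact that $B_g$ is trace-free. The main obstacle is the arithmetic bookkeeping of the coefficients $\tfrac{n}{(n-2)^2}$ in front of $E\times E$ and $\tfrac{R}{(n-1)(n-2)}$ in front of $E$, both of which arise only after summing several independent contributions from the $g$-$E$ part of the Riemann decomposition and the $\tfrac{R}{2n(n-1)} g$ part of $S$; I would use the tracelessness of $B_g$ as a running consistency check throughout.
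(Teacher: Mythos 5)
Your proposal is correct and follows essentially the same route as the paper: expand $B_{jk}=\nabla^i C_{ijk}+W_{ijkl}S^{il}$, commute the derivatives in $\nabla^i\nabla_j S_{ik}$ via the Ricci identity, use the contracted second Bianchi identity $\nabla^i S_{ik}=\nabla_k\operatorname{tr}S=\tfrac{1}{2(n-1)}\nabla_k R$, and substitute $S=\tfrac{1}{n-2}E+\tfrac{R}{2n(n-1)}g$ together with the Weyl decomposition of $Rm$. The paper likewise leaves the final coefficient bookkeeping implicit, so nothing essential is missing from your outline.
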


\begin{proof}
By definition,
\begin{align*}
\nabla^i C_{ijk} &= \nabla^i ( \nabla_i S_{jk} - \nabla_j S_{ik} )\\
&= \Delta_g S_{jk} - ( \nabla_j \nabla_i S^i_k + R_{ijp}^i S_k^p - R_{ijk}^p S_p^i )\\
&= \Delta_g S_{jk} - \nabla_j \nabla_k tr S -  (Ric \times S)_{jk} + (\overset{\circ}{Rm} \cdot S)_{jk},
\end{align*}
where we used the fact
$$\nabla_i S^i_k = \nabla_k tr S$$
by the contracted second Bianchi identity.

Since
$$S = \frac{1}{n-2}E + \frac{R}{2n(n-1)} g$$
and
$$Rm = W + \frac{1}{n-2} E \owedge g + \frac{R}{2n(n-1)}g \owedge g,$$
the conclusion follows by substituting them into 
$$B_{jk} = \nabla^i C_{ijk} + W_{ijkl}S^{il}.$$
\end{proof}

As the first step, we show the metric has to be Einstein under given presumptions:
\begin{proposition}\label{prop:Bach_flat_Einstein}
For $n\geq 3$, there exists a constant $\Lambda_n > 0$ only depends on $n$, such that any closed Bach flat Riemannian manifold $(M^n, g)$ with constant scalar curvature
$$R_g = n(n-1)$$ and 
$$||W_g||_{L^\infty(M, g)}+ ||E_g||_{L^\infty(M, g)} < \Lambda_n:= \frac{n}{3}$$
has to be Einstein.
\end{proposition}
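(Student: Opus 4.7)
The plan is to exploit the Bach-flat condition by pairing the identity of Lemma \ref{lem:Bach_expression} with the traceless Ricci tensor $E$ and integrating over $M$. Since $R = n(n-1)$ is constant, the $\nabla^2 R$ and $\Delta R$ terms drop out; and since $\mathrm{tr}\, E = 0$, the pairing $\langle E\times E - \tfrac{1}{n}|E|^2 g,\, E\rangle$ collapses to $\mathrm{tr}(E^3)$. After an integration by parts on the $\Delta_g E$ term and multiplication by $n-2$, I expect the clean identity
\begin{align*}
\int_M |\nabla E|^2\, dv_g = 2\int_M W(E,E)\, dv_g - \frac{n}{n-2}\int_M \mathrm{tr}(E^3)\, dv_g - n\int_M |E|^2\, dv_g.
\end{align*}

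Because the left-hand side is non-negative, the next step is to control the right-hand side pointwise. I will use Cauchy--Schwarz in the form $|W(E,E)|\leq |W||E|^2$, together with Okumura's algebraic inequality for traceless symmetric $2$-tensors in dimension $n$, namely $|\mathrm{tr}(E^3)|\leq \frac{n-2}{\sqrt{n(n-1)}}|E|^3$. Inserting the $L^\infty$ hypothesis then yields
\begin{align*}
n \int_M |E|^2\, dv_g \leq \Big(2\|W\|_{L^\infty} + \sqrt{\tfrac{n}{n-1}}\,\|E\|_{L^\infty}\Big) \int_M |E|^2\, dv_g \leq 2\big(\|W\|_{L^\infty} + \|E\|_{L^\infty}\big) \int_M |E|^2\, dv_g,
\end{align*}
where the final inequality uses $\sqrt{n/(n-1)}\leq 2$ for $n\geq 3$. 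Provided $\|W\|_{L^\infty} + \|E\|_{L^\infty} < \Lambda_n := n/3$, the coefficient on the right is strictly below $2n/3 < n$, which forces $\int_M |E|^2\, dv_g = 0$, hence $E\equiv 0$ and $g$ is Einstein.

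The main obstacle I anticipate is choosing the correct starting point: combining the Bach-flat identity with Corollary \ref{cor:sym_tensor_est} at $\theta=1$ only yields $\int_M W(E,E)\, dv_g \geq 0$, which is far too weak to conclude anything. Instead one must exploit Bach-flatness directly as an equality, letting $\int_M |\nabla E|^2\, dv_g \geq 0$ supply the single one-sided bound. A secondary subtle point will be extracting enough room in the numerical constants to reach precisely $\Lambda_n = n/3$, which relies on the sharp algebraic bound of Okumura for the cubic traceless term.
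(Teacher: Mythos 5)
Your proof is correct, but after the common first step it takes a genuinely different route from the paper's. Both arguments begin identically: pairing the Bach-flat equation of Lemma \ref{lem:Bach_expression} with $E$ (using that $R_g$ is constant and $tr\,E=0$) and integrating by parts to obtain
\[
\int_M |\nabla E|^2\, dv_g \;=\; \int_M \Big(2\,W(E,E) - \tfrac{n}{n-2}\, tr (E^3) - n|E|^2\Big)\, dv_g,
\]
which is exactly the paper's equation (\ref{eqn:int_nabla_E}). From there, however, the paper does \emph{not} abandon Corollary \ref{cor:sym_tensor_est} as you anticipated one must: your observation that $\theta=1$ only yields $\int_M W(E,E)\,dv_g\ge 0$ is right, but the paper instead takes $\theta=-1$ (the anti-Codazzi case), which makes the factor $\tfrac{2\theta}{1+\theta^2}$ negative and, combined with the identity above, gives $\tfrac{3}{2}\int_M W(E,E)\,dv_g \ge \tfrac{n}{n-2}\int_M \left( tr E^3 + (n-2)|E|^2\right)dv_g$; the cubic term is then controlled by the crude bound $tr E^3 \ge -|E|^3$. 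You instead discard the $\theta$-Codazzi machinery entirely, keep only $\int_M|\nabla E|^2\,dv_g\ge 0$, and compensate with Okumura's sharp inequality $|tr(E^3)|\le \tfrac{n-2}{\sqrt{n(n-1)}}|E|^3$ for traceless symmetric $2$-tensors. Your route is more elementary (nothing from Section 2 is needed) and actually yields a slightly larger threshold, since your coefficients are $\max\{2,\sqrt{n/(n-1)}\}=2$, closing the argument already for $\|W\|_{L^\infty}+\|E\|_{L^\infty}<n/2$, whereas the paper's bottleneck is the factor $\tfrac{n}{n-2}=3$ at $n=3$, which is what forces $\Lambda_n=n/3$. (Even without Okumura, the naive bound $|tr E^3|\le|E|^3$ gives coefficients $\max\{2,\tfrac{n}{n-2}\}\le 3$ and still reaches $n/3$.) The only point to tidy up is that Okumura's lemma is an external ingredient not quoted in the paper, so it should be cited or proved; everything else checks out.
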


\begin{proof}
Since the scalar curvature $R_g$ is a constant, by Lemma \ref{lem:Bach_expression},
\begin{align*}
B_g = \frac{1}{n-2}\Delta_g E  +\frac{ 2}{n-2} \overset{\circ}{W} \cdot E - \frac{n}{ (n-2)^2} \left( E\times E - \frac{1}{n}|E|^2 g  \right)  - \frac{n}{n-2} E = 0.
\end{align*}
That is,
\begin{align*}
\Delta_g E  +2\overset{\circ}{W} \cdot E - \frac{n}{ n-2} \left( E\times E - \frac{1}{n}|E|^2 g  \right)  - n E = 0.
\end{align*}
Thus,
\begin{align*}
-E\Delta_g E = 2 W (E, E) - \frac{n}{ n-2} tr( E^3 )  - n|E|^2
\end{align*}
and hence
\begin{align}\label{eqn:int_nabla_E}
\int_M |\nabla E|^2 dv_g = -\int_M E\Delta_g E dv_g = \int_M \left(2 W (E, E) - \frac{n}{ n-2} tr( E^3 )  - n |E|^2 \right) dv_g.
\end{align}

On the other hand, from Corollary \ref{cor:sym_tensor_est},
\begin{align*}
 \int_M |\nabla E|^2  dv_{g} 
 \geq & \frac{2 \theta} {1 + \theta^2}\int_M \left(  W(E, E ) - \frac{n}{n-2} tr E^3  - n|E|^2 \right)dv_{g},
\end{align*}
for any $\theta \in \mathbb{R}$.

Therefore,
\begin{align}\label{ineq:W(E,E)_|E|^2}
\frac{2(1 - \theta + \theta^2 )}{(1- \theta)^2} \int_M  W(E, E ) dv_g \geq  \frac{n  }{n-2}\int_M \left( tr E^3  + (n-2)|E|^2 \right)dv_{g}.
\end{align}

Since
$$
\int_M  W(E, E ) dv_g \leq ||W||_{L^\infty(M, g)}\int_M |E|^2 dv_g, 
$$
by taking $\theta = -1$, we get
$$
\frac{n  }{n-2}\int_M \left( tr E^3  + (n-2)|E|^2 \right)dv_{g} \leq \frac{3}{2}||W||_{L^\infty(M, g)}\int_M |E|^2 dv_g.
$$
That is,
$$
\frac{n  }{n-2}\int_M  tr E^3  dv_{g} \leq \left(\frac{3}{2}||W||_{L^\infty(M, g)} - n \right)\int_M |E|^2 dv_g.
$$

From the inequality 
$$\int_M  tr E^3  dv_{g} \geq - \int_M |E|^3 dv_g \geq - ||E||_{L^\infty(M, g)}\int_M |E|^2 dv_g ,$$
we have
$$\left(\frac{3}{2}||W||_{L^\infty(M, g)} + \frac{n}{n-2}||E||_{L^\infty(M, g)} - n\right)\int_M |E|^2 dv_g \geq 0.$$
Therefore for any metric $g$ satisfies
$$||W||_{L^\infty(M, g)} + ||E||_{L^\infty(M, g)} < \Lambda_n:= \frac{n}{3},$$
we have $E = 0$.
\end{proof}

It is well-known that the Weyl tensor satisfies an elliptic equation on Einstein manifolds (cf. \cite{Singer}):
\begin{lemma}\label{lem:Weyl_eqn_Einstein}
Let $(M^n,g)$ be an Einstein manifold with scalar curvature
$$R_g = n(n-1) \lambda,$$
then its Weyl tensor satisfies 
\begin{align}\label{Weyl}
\Delta_g W - 2(n-1)\lambda W - 2 \mathcal{Q} (W) = 0,
\end{align}
where $\mathcal{Q} (W) := B_{ijkl} - B_{jikl} + 
B_{ikjl} -  B_{jkil}$ is a quadratic combination of Weyl
tensors with $B_{ijkl} := g^{pq}g^{rs} W_{pijr} W_{qkls}$.\\
\end{lemma}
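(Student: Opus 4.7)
The plan is to derive the elliptic equation for $W$ by applying the second Bianchi identity and then commuting covariant derivatives. The starting observation is that on an Einstein manifold the traceless Ricci $E$ vanishes and the scalar curvature $R$ is constant, so the decomposition
\begin{align*}
R_{ijkl} = W_{ijkl} + \frac{R}{n(n-1)}\bigl(g_{ik}g_{jl} - g_{il}g_{jk}\bigr)
\end{align*}
combined with parallelism of $g$ implies that the covariant derivatives of $R_{ijkl}$ and $W_{ijkl}$ agree. In particular the second Bianchi identity passes to the Weyl tensor:
\begin{align*}
\nabla_a W_{bcde} + \nabla_b W_{cade} + \nabla_c W_{abde} = 0.
\end{align*}
Moreover, by identity $(4)$ of the introduction and the vanishing of the Cotton tensor on Einstein manifolds (a consequence of the contracted second Bianchi identity when $\mathrm{Ric} = (R/n)g$), the Weyl tensor is divergence-free: $\nabla^a W_{abcd} = 0$.

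Next I would take $\nabla^a$ of the Bianchi identity above, contracting on the first slot:
\begin{align*}
\Delta_g W_{bcde} = -\nabla^a\nabla_b W_{cade} - \nabla^a\nabla_c W_{abde}.
\end{align*}
The strategy now is to commute the derivatives in the two terms on the right. Writing $\nabla^a\nabla_b = \nabla_b \nabla^a + [\nabla^a,\nabla_b]$, the $\nabla_b\nabla^a W_{cade}$ piece and its analogue vanish by the divergence-free property of $W$ together with the algebraic Bianchi symmetry (after relabeling indices). What survives is a pure commutator of covariant derivatives acting on $W$, which by the Ricci identity produces a sum of four Riemann $\cdot$ Weyl terms of the form $R_{ab}{}^{p}{}_{q} W$-contractions.

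The final step is to substitute the Einstein decomposition of $R_{ijkl}$ into these Riemann $\cdot$ Weyl terms. The scalar-curvature part of Riemann contributes only traces of $W$ against $g$; since $W$ is totally trace-free, most of these collapse, leaving a single term proportional to $(n-1)\lambda W_{bcde}$ after accounting for the Weyl symmetries. The Weyl-Weyl part of Riemann contributes exactly the combination
\begin{align*}
B_{ijkl} - B_{jikl} + B_{ikjl} - B_{jkil}, \qquad B_{ijkl} := g^{pq}g^{rs}W_{pijr}W_{qkls},
\end{align*}
once the four commutator terms are gathered and the pair symmetries $W_{ijkl}=W_{klij}=-W_{jikl}$ are used. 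Tracking the numerical coefficients then yields the stated equation $\Delta_g W - 2(n-1)\lambda W - 2\mathcal{Q}(W) = 0$.

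The main obstacle is clearly the bookkeeping in the last step: after commuting derivatives one obtains eight quadratic Weyl terms (four from each of the two commutators), and one must use the Bianchi and trace-free symmetries of $W$ repeatedly to reorganize them into the skew-symmetrization $\mathcal{Q}(W)$ with the correct coefficient $2$, while simultaneously keeping careful track of the $(n-1)\lambda$ factor coming from the scalar-curvature piece of Riemann. This is a standard but delicate index calculation, and the cleanest route is probably to work in an orthonormal frame at a point and invoke the pointwise algebraic identities for $W$ to simplify each of the eight terms individually before summing.
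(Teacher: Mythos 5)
The paper does not actually prove this lemma: it is stated as well known and attributed to Singer's paper, so there is no in-text argument to compare against. Your sketch is the standard derivation behind that citation, and its skeleton is correct: on an Einstein manifold $\nabla Rm=\nabla W$, so the second Bianchi identity descends to $W$; the Cotton tensor vanishes, so $\nabla^a W_{abcd}=0$ by the identity $\nabla^l W_{ijkl}=(n-3)C_{ijk}$; contracting the Bianchi identity and commuting derivatives kills the $\nabla_b\nabla^a W$ terms and leaves two four-term curvature commutators; and substituting $Rm=W+\lambda\, g\owedge g$ splits these into a linear part and a quadratic part. The bookkeeping you defer does close up as claimed: in each commutator the $\lambda(g_{ik}g_{jl}-g_{il}g_{jk})$ piece produces one term with coefficient $(n-1)$ from the contraction $g^{pa}R_{pba}{}^{q}$, two terms that combine via the first Bianchi identity into $+\lambda W_{bcde}$, and one term $-\lambda W_{bcde}$, netting $(n-1)\lambda W_{bcde}$ per commutator and hence the coefficient $2(n-1)\lambda$; the eight Weyl--Weyl terms reassemble into $2\mathcal{Q}(W)$ using the pair symmetry and total trace-freeness of $W$. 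The only caution is that the overall signs depend on the curvature and Laplacian conventions (the paper uses the connection Laplacian $g^{ij}\nabla_i\nabla_j$, as one sees from the integration by parts in the proof of Theorem A), so a complete write-up should fix the Ricci-identity sign at the outset; with that settled, your outline is a correct and complete strategy, and it is essentially the proof the cited reference gives.
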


Now we finish this section by proving one of our main theorem:
\begin{proof}[Proof of Theorem A]
We take
$$\varepsilon_0 : =\min \{ \Lambda_n, \frac{n-1}{4} \} = \frac{n-1}{4}.$$
From Proposition \ref{prop:Bach_flat_Einstein}, we conclude that $g$ is an Einstein metric. Applying Lemma \ref{lem:Weyl_eqn_Einstein}, we have
\begin{align*}
- \int_M \langle \Delta_g W - 2(n-1) W, W \rangle dv_g = -2 \int_M \langle \mathcal{Q}(W), W \rangle dv_g \leq 8 \int_M |W|^3 dv_g.
\end{align*}
That is,
\begin{align}\label{ineq:int_Weyl}
\int_M \left( |\nabla W|^2 + 2(n-1) |W|^2 \right) dv_g  \leq 8 \int_M |W|^3 dv_g.
\end{align}

Now we have
\begin{align*}
2(n-1)\int_M  |W|^2 dv_g  \leq 8 \int_M |W|^3 dv_g \leq 8 ||W||_{L^\infty(M, g)} \int_M |W|^2 dv_g.
\end{align*}
Thus the Weyl tensor vanishes, since $$||W||_{L^\infty(M, g)} < \varepsilon_0 =\frac{n-1}{4}.$$
Therefore, the metric $g$ is locally spherical.
\end{proof}
%%%%%%%%%%%%%%%%%%%%%%%%%%%%%%%%%%

\section{$L^{\frac{n}{2}}$-sphere theorem}

Let $(M, g)$ be an Riemannian manifold. Suppose the Yamabe constant associated to it satisfies that
$$
Y(M, [g]) := \inf_{0 \not\equiv u \in C^\infty(M)} \frac{\int_M \left(\frac{4(n-1)}{n-2}|\nabla u|^2 + R_g u^2 \right)dv_g}{\left(\int_M u^{\frac{2n}{n-2}} dv_g \right)^{\frac{n-2}{n}}} \geq \alpha_0 > 0.
$$
By normalizing the scalar curvature such that $R_g = n(n-1)$, we get
\begin{align*}
\left(\int_M u^{\frac{2n}{n-2}} dv_g \right)^{\frac{n-2}{n}} \leq& \frac{1}{Y(M, [g])} \int_M \left(\frac{4(n-1)}{n-2}|\nabla u|^2 + R_g u^2 \right)dv_g \\
=& \frac{n(n-1)}{Y(M, [g])} \int_M \left(\frac{4}{n(n-2)}|\nabla u|^2 + u^2 \right)dv_g \\
\leq& \frac{4n(n-1)}{3Y(M, [g])} \int_M \left(|\nabla u|^2 + u^2 \right)dv_g \\
\leq& \frac{4n(n-1)}{3\alpha_0} \int_M \left(|\nabla u|^2 + u^2 \right)dv_g \\
\end{align*}
Denote $C_S:= \frac{4n(n-1)}{3\alpha_0} > 0$, we get the \emph{Sobolev's inequality}
\begin{align}\label{ineq:Sobolev}
\left(\int_M u^{\frac{2n}{n-2}} dv_g \right)^{\frac{n-2}{n}} \leq C_S \int_M \left(|\nabla u|^2 + u^2 \right)dv_g
\end{align}
Note that, the constant $C_S > 0$ only depends on $n$ and $\alpha_0$ and is independent of the metric $g$.

\begin{lemma}\label{lem:Bach_flat_L^{n/2}_Einstein}
Let $(M^n, g)$ be a Bach flat Riemannian manifold with constant scalar curvature
$$R_g = n(n-1).$$ Suppose there is a constant $\alpha_0$ such that its Yamabe constant satisfies that
\begin{align}
Y(M, [g]) \geq \alpha_0 > 0.
\end{align}
Then $(M^n, g)$ is Einstein, if
\begin{align}
||W||_{L^{\frac{n}{2}}(M,g)} + ||E||_{L^{\frac{n}{2}}(M,g)} <  \delta_0:= \frac{\alpha_0}{4n(n-1)} = \frac{1}{3C_S}.
\end{align}
\end{lemma}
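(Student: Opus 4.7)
The strategy is to mimic the proof of Proposition \ref{prop:Bach_flat_Einstein}, replacing the $L^\infty$ pointwise control of $W$ and $E$ by $L^{n/2}$-integral control via H\"older's inequality, combined with the Sobolev inequality \eqref{ineq:Sobolev} which is available precisely because of the Yamabe lower bound.

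First I would reuse the integration-by-parts identity from the proof of Proposition \ref{prop:Bach_flat_Einstein}: since $R_g$ is constant and $B_g \equiv 0$, Lemma \ref{lem:Bach_expression} reduces the Bach equation to $-E_{ij}\Delta_g E^{ij} = 2W(E,E) - \frac{n}{n-2} tr(E^3) - n|E|^2$, and integration by parts yields
\[
\int_M |\nabla E|^2 \, dv_g + n \int_M |E|^2 \, dv_g = \int_M \left( 2 W(E,E) - \tfrac{n}{n-2} tr(E^3) \right) dv_g.
\]
Bounding the right-hand side pointwise by $|W(E,E)| \le |W||E|^2$ and $|tr(E^3)| \le |E|^3$ and then applying H\"older's inequality with conjugate exponents $n/2$ and $n/(n-2)$ gives
\[
\int_M |\nabla E|^2\, dv_g + n \int_M |E|^2\, dv_g \le \left( 2 \|W\|_{L^{n/2}} + \tfrac{n}{n-2}\|E\|_{L^{n/2}} \right) \|E\|_{L^{2n/(n-2)}}^2.
\]

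Next I invoke Kato's inequality $|\nabla |E|| \le |\nabla E|$, so that applying \eqref{ineq:Sobolev} to $u = |E|$ yields $\|E\|_{L^{2n/(n-2)}}^2 \le C_S \int_M \left( |\nabla E|^2 + |E|^2 \right) dv_g$. Setting
\[
K := \left(2\|W\|_{L^{n/2}} + \tfrac{n}{n-2}\|E\|_{L^{n/2}} \right) C_S,
\]
the combined estimate reads $(1 - K) \int_M |\nabla E|^2 \, dv_g + (n - K) \int_M |E|^2 \, dv_g \le 0$. Using $\max\{2, n/(n-2)\} \le 3$ for every $n \ge 3$, the hypothesis forces $K < 3 \delta_0 C_S = 1$, so both coefficients $(1-K)$ and $(n-K)$ are strictly positive while both integrals on the left are non-negative. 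This forces $E \equiv 0$, i.e., $(M,g)$ is Einstein.

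The main subtlety is calibrating $\delta_0$ so that a single threshold handles all dimensions $n \ge 3$ simultaneously: the tightest case is $n=3$, where $n/(n-2)=3$ saturates the factor $3$ appearing in $\delta_0 = 1/(3C_S)$, whereas for $n \ge 4$ the dominant coefficient drops to $2$ and the argument has substantial room to spare. The use of Kato's inequality, though standard, is essential since the Sobolev inequality available from the Yamabe hypothesis is stated only for scalar functions.
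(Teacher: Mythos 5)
Your proposal is correct and follows essentially the same route as the paper: the identity \eqref{eqn:int_nabla_E} from the Bach-flat equation, H\"older with exponents $\tfrac{n}{2}$ and $\tfrac{n}{n-2}$, Kato plus the Sobolev inequality \eqref{ineq:Sobolev}, and the observation that $\max\{2,\tfrac{n}{n-2}\}\le 3$ calibrates $\delta_0 = \tfrac{1}{3C_S}$. Your bookkeeping with the constant $K<1$ is a slightly cleaner way to organize the same cancellation the paper performs explicitly.
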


\begin{proof}
From equation (\ref{eqn:int_nabla_E}) and \emph{H\"older's inequality},
\begin{align*}
\int_M |\nabla E|^2 dv_g &= \int_M \left(2 W (E, E) - \frac{n}{ n-2} tr( E^3 )  - n |E|^2 \right) dv_g\\
&\leq \left( 2||W||_{L^{\frac{n}{2}}(M,g)} + \frac{n}{ n-2}||E||_{L^{\frac{n}{2}}(M,g)}\right) ||E||^2_{L^{\frac{2n}{n-2}}(M,g)} - n||E||^2_{L^2(M,g)} \\
&\leq 3 \delta_0 ||E||^2_{L^{\frac{2n}{n-2}}(M,g)} - n||E||^2_{L^2(M,g)}.
\end{align*}
By \emph{Sobolev's inequality} (\ref{ineq:Sobolev}) and the \emph{Kato's inequality},
$$||E||^2_{L^{\frac{2n}{n-2}}(M,g)} \leq C_S \left(||\nabla |E| ||^2_{L^2(M,g) }+ ||E||^2_{L^2(M,g)} \right) \leq C_S \left(||\nabla E ||^2_{L^2(M,g)} + ||E||^2_{L^2(M,g)} \right).
$$
Thus, we have
\begin{align*}
||\nabla E||^2_{L^2(M,g)}\leq& 3 \delta_0 C_S \left(||\nabla E ||^2_{L^2(M,g)} + ||E||^2_{L^2(M,g)} \right) - n||E||^2_{L^2(M,g)} \\
=& ||\nabla E ||^2_{L^2(M,g)} - (n - 1) ||E||^2_{L^2(M,g)}  .
\end{align*}
Therefore, $E$ vanishes identically on $M$ and hence $(M,g)$ is Einstein.
\end{proof}

Now we can show
\begin{proof}[Proof of Theorem B]
From Lemma \ref{lem:Bach_flat_L^{n/2}_Einstein}, $(M,g)$ has to be Einstein. Now from \emph{Sobolev's inequality} (\ref{ineq:Sobolev}), \emph{Kato's inequality} and inequality (\ref{ineq:int_Weyl}), we have
\begin{align*}
||W||^2_{L^{\frac{2n}{n-2}}(M,g)} &\leq C_S \int_M \left( |\nabla |W||^2 + |W|^2 \right) dv_g
\leq  C_S \int_M \left( |\nabla W|^2 + |W|^2 \right) dv_g 
\leq  8 C_S \int_M  |W|^3 dv_g .
\end{align*} 
Applying \emph{H\"older's inequality},
\begin{align*}
\int_M  |W|^3 dv_g \leq ||W||_{L^{\frac{n}{2}}(M,g)}||W||^2_{L^{\frac{2n}{n-2}}(M,g)} 
\end{align*}
and hence
$$\left( 1 - 8C_S  ||W||_{L^{\frac{n}{2}}(M,g)}\right)||W||^2_{L^{\frac{2n}{n-2}}(M,g)} \leq 0,$$
which implies that $W$ vanishes identically on $M$ since
$$||W||_{L^{\frac{n}{2}}(M,g)} < \tau_0:=\frac{3\alpha_0}{32n(n-1)} = \frac{1}{8C_S}.$$
Therefore, $(M, g)$ is isometric to a quotient of $\mathbb{S}^n$.
\end{proof}

As for $n = 4$, we have
\begin{proof}[Proof of Theorem C]
Let $\hat g \in [g]$ be the Yamabe metric, which means
$$R_{\hat g} \left(Vol(M^4, \hat g)\right)^{\frac{1}{2}} = Y(M^4, [g]).$$
We can also normalize it such that 
$$R_{\hat  g} = 12.$$
According to the solution of \emph{Yamabe problem}, 
$$Y(M^4, [g]) \leq Y(\mathbb{S}^4, g_{\mathbb{S}^4}) = 12 \cdot \left( \frac{8}{3} \pi^2\right)^{\frac{1}{2}} = 8 \sqrt{6} \pi$$
and hence
$$Vol(M^4, \hat g )\leq Vol(\mathbb{S}^4, g_{\mathbb{S}^4}) = \frac{8}{3} \pi^2.$$

From the \emph{Gauss-Bonnet-Chern formula},
\begin{align}
\int_{M^4} \left( Q_{\hat g} + \frac{1}{4} |W_{\hat g}|^2 \right) dv_{\hat g} = 8\pi^2 \chi(M^4),
\end{align}
where 
\begin{align}
Q_{\hat g}:= - \frac{1}{6} \Delta_{\hat g} R_{\hat g} - \frac{1}{2}|E_{\hat g}|^2 + \frac{1}{24} R_{\hat g}^2
\end{align}
is the \emph{Q-curvature} for metric $\hat g$. Thus,
$$||E_{\hat g}||^2_{L^2(M, \hat g)} = \frac{1}{2} ||W_{\hat g}||^2_{L^2(M, \hat g)} + 12 Vol(M^4, \hat g) - 16 \pi^2 \chi(M^4) \leq \frac{1}{2} ||W_{\hat g}||^2_{L^2(M, \hat g)}+ 16 \pi^2 ( 2 -\chi(M^4))$$
and hence
\begin{align*}
||W_{\hat g}||^2_{L^2(M, \hat g)} + ||E_{\hat g}||^2_{L^2(M, \hat g)} &\leq \frac{3}{2} ||W_{\hat g}||^2_{L^2(M, \hat g)} + 16 \pi^2 ( 2 -\chi(M^4)) \\
&= \frac{3}{2} ||W_g||^2_{L^2(M,g)} + 16 \pi^2 ( 2 -\chi(M^4))\\
& < \frac{\alpha_0}{128},
\end{align*}
where we used the fact that $||W_g||_{L^2(M,g)}$ is conformally invariant for $4$-dimensional manifolds.

On the other hand, the metric $\hat g$ is also Bach-flat, since Bach-flatness is conformally invariant for $4$-dimensional manifolds. Applying Theorem B to the Yamabe metric $\hat g$, we conclude that $(M^4, \hat g)$ is isometric to a quotient of the round sphere $\mathbb{S}^4$. 

For the quotient of an even dimensional sphere, only identity and $\mathbb{Z}_2$-actions make it a smooth manifold. Therefore, $(M^4, g)$ is conformal to $\mathbb{S}^4$ or $\mathbb{R}P^4$ with canonical metrics.
\end{proof}
%%%%%%%%%%%%%%%%%%%%%%%%%%%%%%%%%%%%%

\bibliographystyle{amsplain}

\end{document}